\newtheorem{theorem}{Theorem}[section]
\newtheorem{lemma}[theorem]{Lemma}
\newtheorem{definition}[theorem]{Definition}
\theoremstyle{remark}
\numberwithin{equation}{section}
\begin{document}

\title{A generating set of Reidemeister moves of oriented virtual knots}

\author{Danish Ali}
\address{School of mathematical sciences, Dalian University of Technology, China}
\curraddr{}
\email{danishali@dlut.edu.cn}

\subjclass[2020]{Primary 57K10, 57K12}


\date{May 29, 2025}

\keywords{Reidemeister moves, oriented virtual knots, generating set}

\begin{abstract}

In oriented knot theory, verifying a quantity is an invariant involves checking its invariance under all oriented Reidemeister moves, a process that can be intricate and time-consuming. A generating set of oriented moves simplifies this by requiring verification for only a minimal subset from which all other moves can be derived. While generating sets for classical oriented Reidemeister moves are well-established, their virtual counterparts are less explored. In this study, we enumerate the oriented virtual Reidemeister moves, identifying seventeen distinct moves after accounting for redundancies due to rotational and combinatorial symmetries. We prove that a four-element subset serves as a generating set for these moves. This result offers a streamlined approach to verifying invariants of oriented virtual knots and lays the groundwork for future advancements in virtual knot theory, particularly in the study of invariants and their computational properties.
\end{abstract}

\maketitle

\section{Introduction}
Knots are considered equivalent if they can be continuously deformed into one another in three-dimensional space without cutting or gluing. This means that we can manipulate the knot in three-dimensional space, but we can not break or reconnect any of its strands. If two knots are topologically equivalent, they represent the same knot. Knots are often represented using knot diagrams, which are two-dimensional representations of the knot with over-crossings and under-crossings information. A knot invariant is a mathematical property or quantity associated with knots that remains constant for any given knot diagram. This means that if two knots are equivalent, they will share the same value as this invariant. Knot invariants are essential in knot theory for distinguishing between different types of knots and proving whether two knots are the same or different. To prove that a certain quantity is indeed a knot invariant, one must show that this quantity does not change when the knot diagram is manipulated through Reidemeister moves.

In classical knot theory, knots are studied as embeddings of circles in 3-dimensional space $\mathbb{R}^3$ or in the 3-sphere $\mathbb{S}^3$. Virtual knot theory was introduced by Louis H. Kauffman as a generalization of classical knot theory \cite{Kauffman}. Virtual knot theory extends classical knot theory by allowing for diagrams that include virtual crossings, representing interactions that cannot be realized in $\mathbb{R}^3$. These virtual crossings provide a way to study knots as embeddings in thickened surfaces. Reidemeister moves are specific local operations we can perform on the diagram to change its appearance while preserving the knot's equivalence \cite{Reidemeister}. Two classical knot diagrams $D$ and $D'$ are equivalent if and only if $D$ can be transformed into $D'$ using classical Reidemeister moves ($C1-C3$), as shown in Fig.\ref{crmu}. Similarly, the two virtual knot diagrams $V$ and $V'$ are equivalent if and only if $V$ can be transformed into $V'$ using classical Reidemeister moves and virtual Reidemeister moves ($V1-V4$), as shown in Fig.\ref{vrmu}. A combination of classical Reidemeister moves and virtual Reidemeister moves is known as generalized Reidemeister moves \cite{danish}.

\begin{figure}
    \centering
    \includegraphics[width=\textwidth]{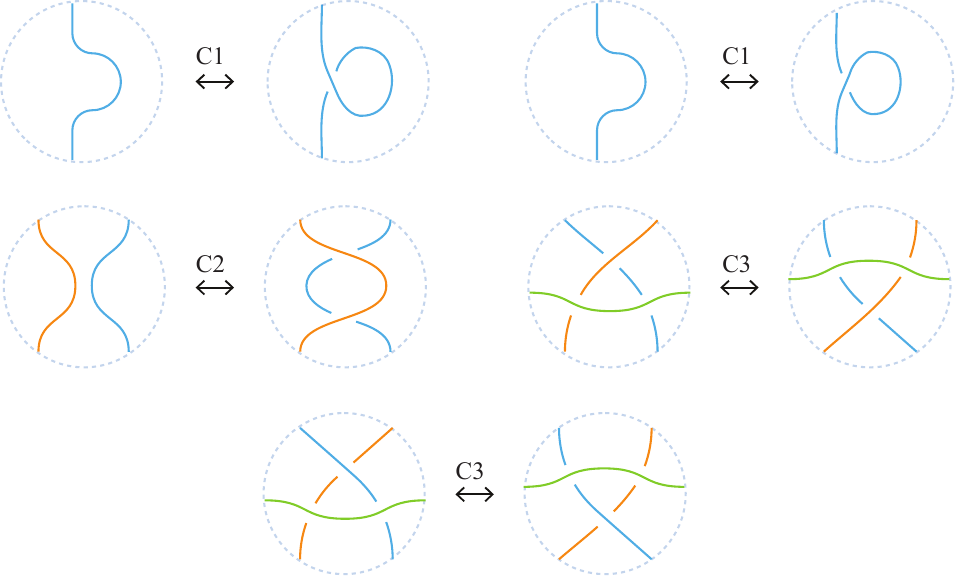}
    \caption{Classical Reidemeister moves}
    \label{crmu}
\end{figure}

\begin{figure}
    \centering
    \includegraphics[width=\textwidth]{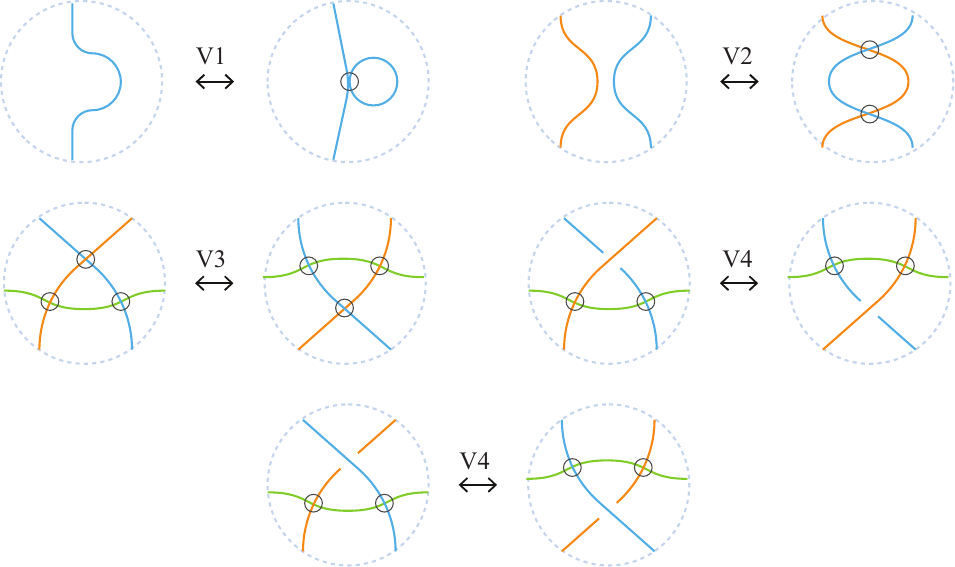}
    \caption{Virtual Reidemeister moves}
    \label{vrmu}
\end{figure}

\begin{figure}
    \centering
    \includegraphics[width=0.9\linewidth]{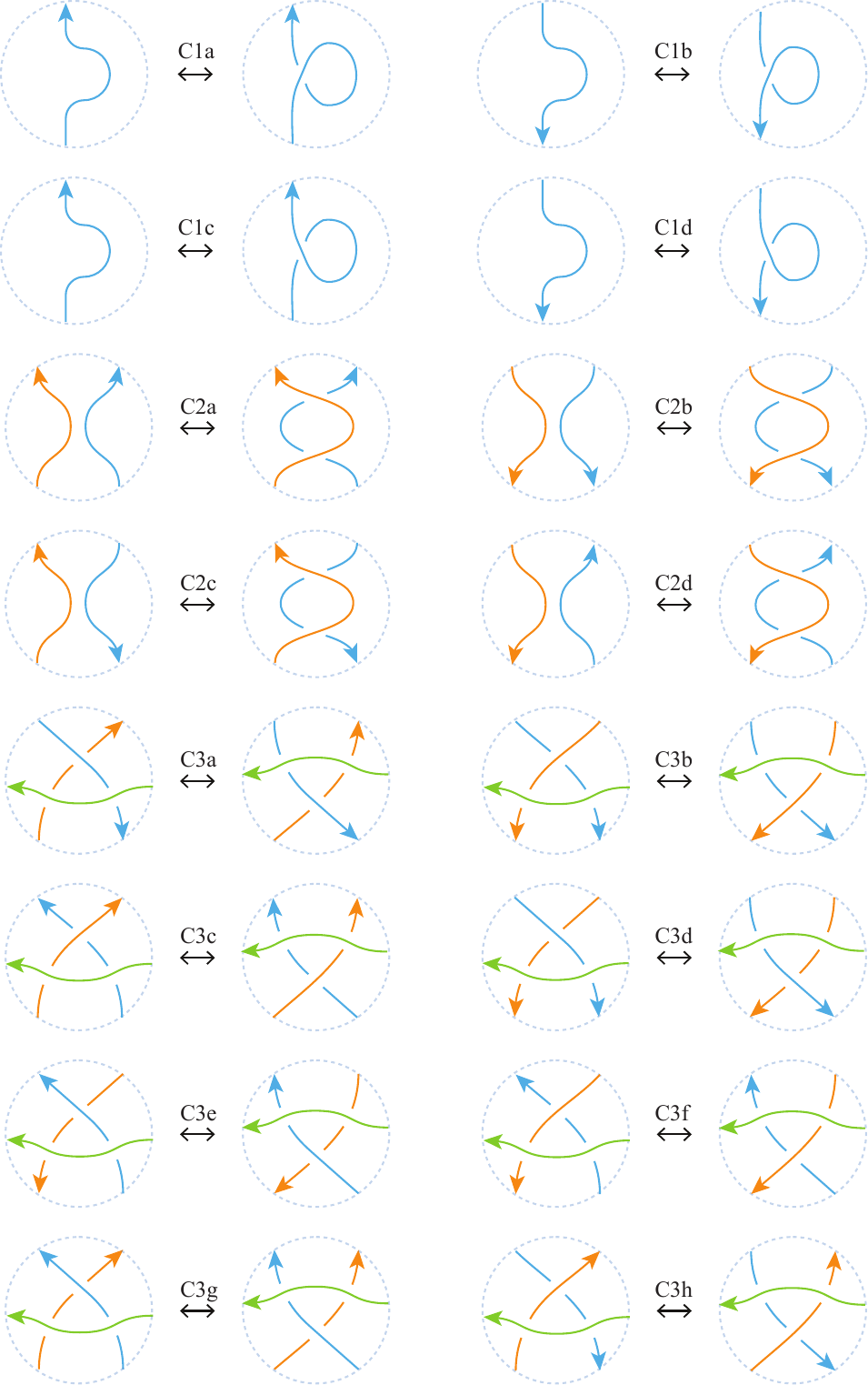}
    \caption{Oriented classical Reidemeister moves}
    \label{crmo}
\end{figure}

\begin{figure}
    \centering
    \includegraphics[width=\linewidth]{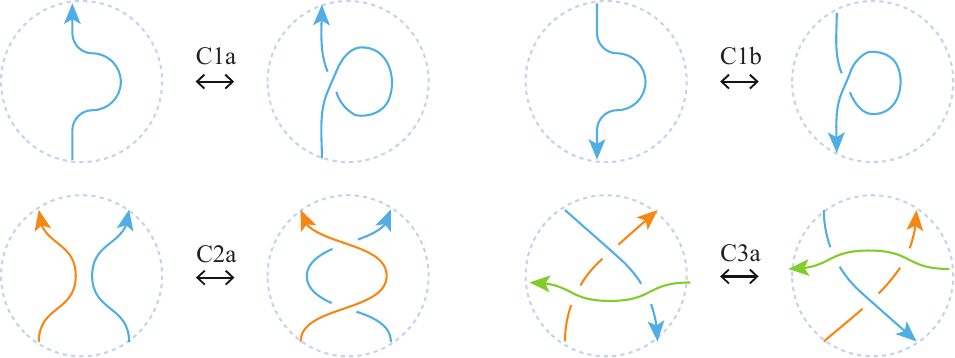}
    \caption{A generating set of classical Reidemeister moves}
    \label{gscrm}
\end{figure}

When working with oriented knots, which have a specified direction (indicated by arrows on the diagram), the versions of the Reidemeister moves need to respect the orientation. This means that when applying these moves, the direction of each strand in the knot diagram must be preserved. Oriented Reidemeister moves are thus crucial for defining knot invariants in the context of oriented knot diagrams, as they ensure that the property being studied is consistent across diagrams that reflect the same knot with a defined direction.

In the study of oriented knot theory, there are 16 oriented versions of classical Reidemeister moves, as shown in Fig.\ref{crmo}. These are variations of the classical Reidemeister moves that take into account the direction (orientation) of the strands in the knot. Verifying that a particular quantity is an invariant for oriented knots requires showing that it remains unchanged under all 16 of these moves. This process can be labour-intensive and complex. To simplify this process, it is advantageous to have a generating set of oriented Reidemeister moves. The main benefit of having a generating set is that it reduces the work required to prove that a certain quantity defined on oriented knot diagrams is indeed an invariant. Instead of checking invariance under all 16 possible moves, one only needs to verify it for the smaller generating set. If the quantity remains invariant under the moves in the generating set, then, by extension, it is invariant under any oriented Reidemeister move due to the fact that these can be constructed from the generating set through finite sequences and isotopes.

\begin{definition}[Generating set]
Let $S$ and $G$ be some sets. The set $G$ is called a generating set of $S$ if every element of $S$ can be represented or expressed in terms of elements from $G$. In other words, $G$ generates $S$ if, for any $e_i \in S$, there exists a way to express each $e_i$ using the elements of $G$ with some operations.
\end{definition}

Polyak introduced the concept of a minimal generating set for classical Reidemeister moves \cite{polyak}. Polyak originally used the symbol $\Omega$ to denote the classical Reidemeister moves, but for clarity and convenience, we use $C$ for classical Reidemeister moves and $V$ for virtual Reidemeister moves. This notation helps to avoid confusion, especially when studying equivalence classes of virtual knots and links. Identifying which moves are classical and which are virtual helps in analyzing the steps involved in proving equivalence or computing invariants for virtual knots. Polyak in \cite{polyak} proved that a minimal generating set of oriented Reidemeister moves for classical knot diagrams consists of four specific oriented Reidemeister moves as shown in Fig.\ref{gscrm}. He demonstrated that all oriented versions of Reidemeister moves can be generated by a set of just four moves and that no fewer than four are sufficient to generate them all.

\begin{theorem}[\cite{polyak}, Theorem 1.1]
Let $D$ and $D'$ be two diagrams in $\mathbb{R}^2$, representing the same oriented link. Then one may pass from $D$ to $D'$ by isotopy and a finite sequence of four oriented Reidemeister moves $C1a, C1b, C2a$ and $C3a$, shown in Fig.\ref{gscrm}.
\end{theorem}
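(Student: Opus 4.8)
The plan is to build on the classical Reidemeister theorem, which guarantees that any two diagrams of the same oriented link in $\mathbb{R}^2$ are related by planar isotopy together with a finite sequence of the sixteen oriented Reidemeister moves depicted in Fig.~\ref{crmo}. Since every Reidemeister move is invertible, it then suffices to prove that each of the twelve moves lying outside the proposed set $\{C1a, C1b, C2a, C3a\}$ — namely the two remaining oriented $C1$ moves, the three remaining oriented $C2$ moves, and the seven remaining oriented $C3$ moves — can be realized as a finite composition of the four generating moves (applied in either direction) and planar isotopy. Once these twelve derivations are exhibited, the theorem follows immediately, since any sequence of the sixteen moves can be rewritten as a sequence over the generating set alone.

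First I would dispose of the $C2$ and $C1$ moves. Starting from $C2a$, the version of the two-strand move with the opposite over/under data is obtained by a \emph{detour}: attach a small cancelling curl using $C1a$ or $C1b$, push the new crossing past the bigon with one application of $C3a$, then remove the curl, so that the net effect is the desired $C2$ move. The subtler case is changing the mutual orientation of the two strands (parallel versus antiparallel), which cannot be done by adjoining curls alone and instead requires threading a strand through a triangle with $C3a$. I would record each of the three missing $C2$ moves as one such explicit sequence. The two remaining $C1$ moves, of opposite writhe or opposite local orientation, I would then derive last, using the now-available $C2$ and $C3a$ moves: introduce an auxiliary pair of crossings, apply an available $C1$ move inside the enlarged region, and cancel the auxiliary crossings so that only the targeted curl remains.

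With all four $C1$ and all four $C2$ moves available, I would turn to the eight oriented $C3$ moves, which form the heart of the argument. The governing principle is that two oriented $C3$ moves whose local pictures differ by the orientation of a single participating strand, or by the over/under status of one strand, are bridged by a pair of $C2$ moves: one introduces a bigon so that the crossing configuration demanded by $C3a$ appears locally, the triangle is then swept across via $C3a$, and the second $C2$ move removes the bigon. Iterating these strand-reorientation and strand-switch reductions, I would express each of the remaining seven $C3$ moves in terms of $C3a$, the derived $C2$ moves, and the $C1$ moves, organizing the seven cases so that each is reached from $C3a$ by as few reduction steps as possible.

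The main obstacle I anticipate is exactly this last step. Unlike the $C1$ and $C2$ cases, where a single auxiliary strand suffices, each $C3$ derivation requires simultaneously tracking three strands, their cyclic height ordering, and their three orientations, and then verifying that every auxiliary bigon introduced by a $C2$ move is genuinely annihilated later without leaving residual crossings. Ensuring that the over/under data and orientations match at each intermediate stage — so that the moves one wants to apply are actually legal — is where the careful diagrammatic bookkeeping, and the real risk of error, lies. By contrast, once the detour technique is set up, the $C1$ and $C2$ reductions should be essentially routine.
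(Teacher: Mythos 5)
This statement is not proved in the paper at all: it is Polyak's Theorem~1.1, imported by citation from \cite{polyak} as background for the paper's own virtual result, so there is no in-paper proof to compare your attempt against. Measured against Polyak's actual argument, your outline has the correct architecture, and it is the same architecture this paper later follows for its virtual analogue (Theorem~\ref{theorem}): start from the oriented Reidemeister theorem to reduce everything to the sixteen oriented moves of Fig.~\ref{crmo}, then derive the twelve moves outside $\{C1a, C1b, C2a, C3a\}$ in a dependency order --- the missing $C2$ moves first, then the remaining $C1$ moves from the now-available $C2$ moves, then the seven remaining $C3$ moves by conjugating with $C2$-bigons so that $C3a$ can be applied in the middle. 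Your count of the twelve outstanding moves (two of type $C1$, three of type $C2$, seven of type $C3$) and the bigon-conjugation principle for the $C3$ family are exactly right.

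Two caveats keep this from being a proof rather than a plan. First, in a theorem of this kind the explicit diagrammatic sequences \emph{are} the mathematical content; your proposal defers all twelve of them, and you yourself identify the $C3$ bookkeeping as the place where such plans typically fail. Second, your two $C2$ recipes appear to be interchanged relative to what actually works. In Polyak's derivation it is the \emph{antiparallel} (orientation-reversed) $C2$ moves that are obtained by the curl detour: a curl added via $C1a$ or $C1b$ makes two antiparallel strands locally parallel, so that $C2a$ applies, $C3a$ slides the strand past the curl crossing, and the curl is then removed empty. The parallel $C2$ move with the opposite over/under data, by contrast, cannot be reached by a single curl plus one $C3a$ --- with only $C2a$ available, a curl renders the relevant segments antiparallel, exactly where $C2a$ does not apply --- and is instead obtained by conjugating with the already-derived antiparallel moves and $C3a$. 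This is a repairable misassignment, but it illustrates that the legality of each intermediate move is precisely the part that cannot be waved through.
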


Caprau and Scott in \cite{Caprau} proved that in addition to \{$C1a, C1b, C2a$, $C3a$\} there are eleven more minimal four-element generating sets of oriented Reidemeister moves. They showed that these twelve sets encompass all possible minimal generating sets for oriented classical Reidemeister moves. Singular knots are a generalization of classical knots that allow singular crossings \cite{sheikh}. Analogous work for singular knot theory was conducted by Bataineh et al. in \cite{gsosk}, where they provided a detailed analysis of a generating set of Reidemeister moves for oriented singular link diagrams. Suwara in \cite{Suwara} introduced the concept of directed oriented Reidemeister moves. Specifically, a Reidemeister move of type $C1$ or $C2$ is called forward if it increases the number of crossings in the diagram, and backward if it decreases the number of crossings in the diagram. Suwara proved that the set of 8 directed Polyak moves generates a set of directed oriented Reidemeister moves. The set is proven to be minimal because removing any move would lead to the inability to reconstruct certain transformations needed for equivalence.

Virtual knot theory extends classical knot theory by introducing virtual crossings. This leads to new versions of the Reidemeister moves involving these virtual crossings, requiring more complex transformations to analyze and generate equivalent diagrams. A similar analysis of generating sets for oriented virtual knot diagrams has been less developed. First, we will enumerate all possible oriented virtual Reidemeister moves. To clarify the distinctions between the moves $V1, V2, V3,$ and $V4$, we outline their unique characteristics based on the local configurations and orientations in the oriented link diagram. The $V1$ move involves the addition or removal of a single oriented loop on a strand, the loop can have two orientations (counterclockwise
and clockwise). $V2$ moves concern the creation or elimination of a bigon formed by two strands, differentiated by the relative orientations of the strands (parallel or antiparallel). For $V3$, which involves a triangle configuration with three virtual crossings, distinguished by the orientations of the three strands. Similarly, the $V4$ moves also involved a triangle configuration with one classical crossing and two virtual crossings, distinguished by the orientations of the three strands and the type of classical crossing (positive or negative). 

There are two possible orientation of $V1$ and three possible orientations for the arcs involved in $V2$. $V3$ has only four possible orientations while $V4$ has eight possible orientations as shown in Fig.\ref{vrmo}. A combinatorial approach produces several additional moves, but it becomes clear that some are merely rotations of others. Therefore, these redundant moves are excluded from consideration. Our focus will be only on these seventeen oriented versions of Reidemeister moves, as shown in Fig.\ref{vrmo}.

\begin{figure}
    \centering
    \includegraphics[width=0.8\linewidth]{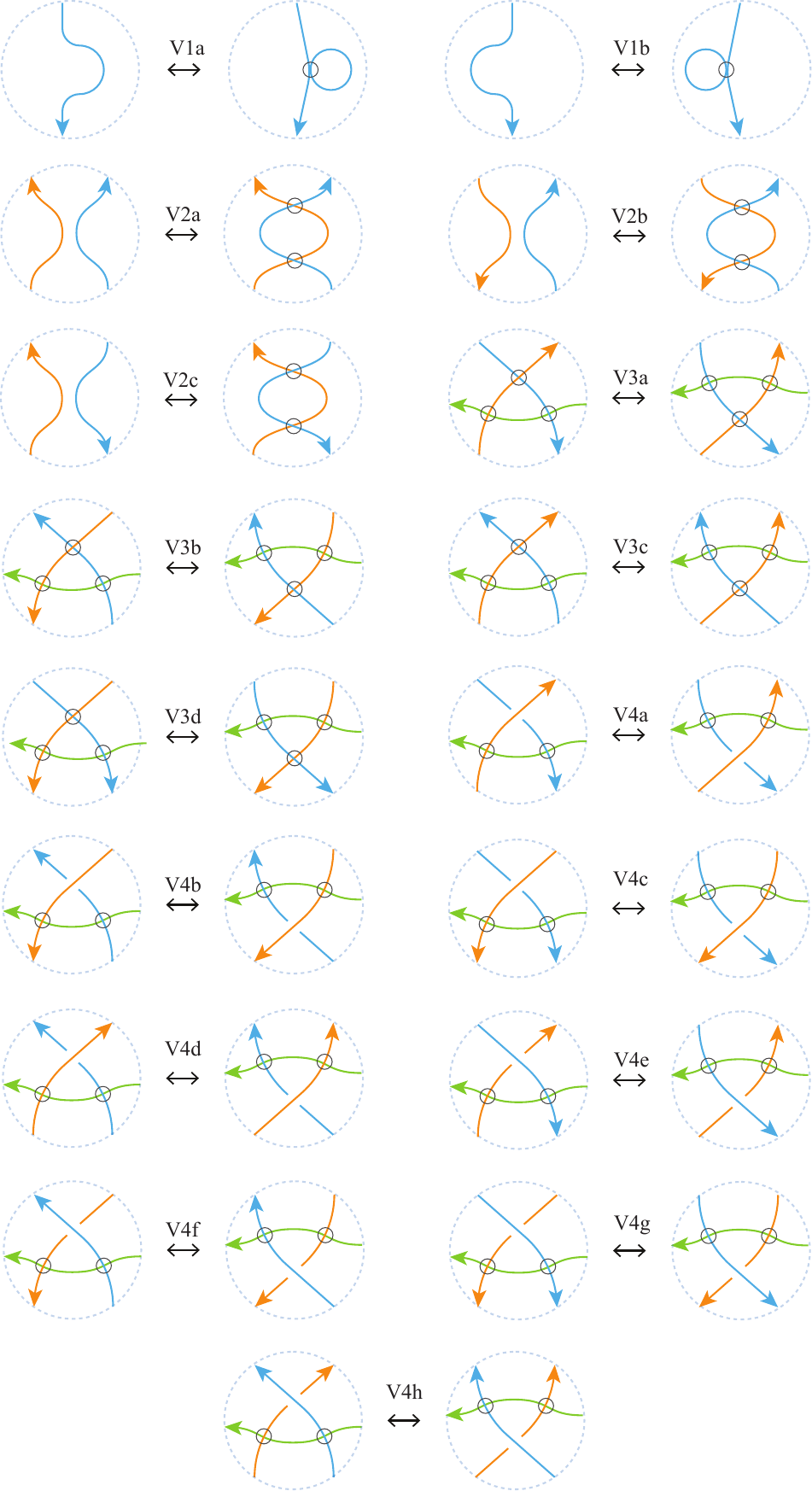}
    \caption{Oriented virtual Reidemeister moves}
    \label{vrmo}
\end{figure}

\begin{figure}[htb]
    \centering
    \includegraphics[width=1\linewidth]{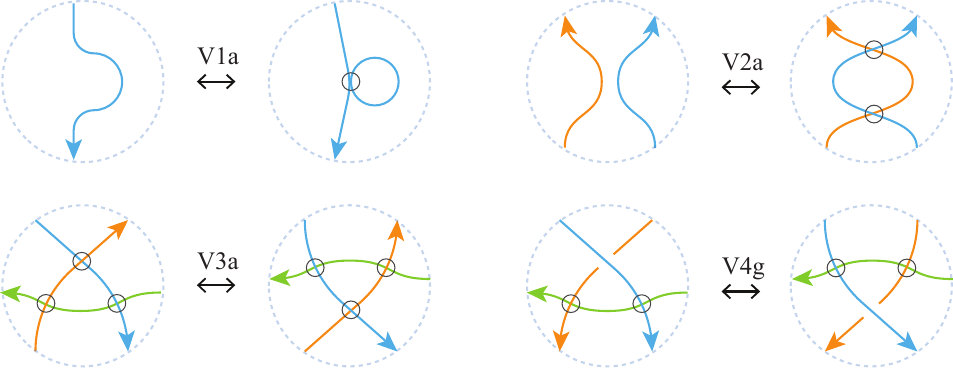}
    \caption{A generating set of virtual Reidemeister  moves}
    \label{gsvrm}
\end{figure}

Understanding a minimal generating set for oriented virtual knot diagrams is crucial. With seventeen oriented versions of Reidemeister moves, it can become tedious to check if a certain quantity defined on oriented virtual knot diagrams yields an invariant for oriented virtual knots. Hence, it is useful to have a generating set of oriented virtual Reidemeister moves to minimize the work required to check for invariance. This paper aims to address the gap in the literature by identifying a generating set of oriented Reidemeister moves for virtual links. We showed that the set of oriented virtual Reidemeister moves shown in Fig.\ref{gsvrm} is a generating set of all possible oriented virtual Reidemeister moves. This exploration will contribute to the foundational work necessary for more advanced research in virtual knot theory and its applications \cite{danishgc}. Interestingly, our
generating set has only one oriented move from each $V1 - V4$, unlike
classical knot theory, where we have two moves for the first oriented
Reidemeister move ($C1$).

\begin{theorem}\label{theorem}
Let $V$ and $V'$ be two oriented virtual knot diagrams in $\mathbb{R}^2$. Then the diagram $V$ can be transformed into $V'$ by isotopy, and a finite sequence of oriented Reidemeister moves $C1a, C1b, C2a, C3a,$ $ V1a, V2a, V3a$ and $V4g$.
\end{theorem}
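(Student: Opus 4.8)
The plan is to proceed by direct reduction: the already-established classical result (Theorem 1.1) handles the classical moves, so it suffices to show that each of the remaining virtual moves not in the proposed generating set $\{V1a, V2a, V3a, V4g\}$ can be realized as a finite sequence of moves from the generating set together with the classical moves already available. Since the classical generators $C1a, C1b, C2a, C3a$ generate all sixteen oriented classical moves by Polyak's theorem, I may freely use \emph{any} oriented classical Reidemeister move in the derivations; the real work is purely among the virtual moves and their interaction with classical crossings.

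First I would organize the seventeen oriented virtual moves by type exactly as in Fig.~\ref{vrmo}: the two orientations of $V1$, the three of $V2$, the four of $V3$, and the eight of $V4$. For each type I would take the chosen generator ($V1a$, $V2a$, $V3a$, $V4g$ respectively) and show how to obtain every sibling of the same type from it. The key technical device is that a purely virtual crossing behaves like a transposition of strands, so reversing the orientation of a strand passing through a virtual configuration, or rotating the local picture, can be absorbed by pre- and post-composing with already-derived virtual moves. Concretely, I would first derive the remaining $V1$ orientation from $V1a$ (using a rotation argument, possibly mediated by a $V2$ move to flip the loop), then derive the other $V2$ orientations from $V2a$, then all four $V3$ orientations from $V3a$, since the $V3$ moves are purely virtual and differ only by strand orientations that can be adjusted using the already-available $V2$ variants. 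Throughout, the standard trick will be the ``detour'' argument: a strand carrying a virtual crossing can be slid across the diagram freely, which lets me rotate or reflect a local tangle by routing strands through additional virtual crossings and then cancelling them with $V2$.

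The heart of the proof, and the step I expect to be the main obstacle, is the $V4$ family: these eight moves each involve one genuine classical crossing interacting with two virtual crossings, so they are the only virtual moves where orientation \emph{and} the sign of the classical crossing both vary. Starting from the single generator $V4g$, I must produce the other seven, which requires changing the orientations of the three strands and switching the classical crossing between positive and negative. The crossing-sign change cannot be done by virtual moves alone; it has to be effected indirectly, by combining $V4g$ with classical moves (the $C1$ and $C2$ variants) and the other virtual moves already derived, so that a $V4$ configuration with the ``wrong'' classical crossing is converted into one with the ``right'' crossing via a sequence that temporarily introduces and removes auxiliary classical crossings. I anticipate that each such derivation is a carefully staged sequence: insert a canceling pair of classical crossings via a derived $C2$ move, push the virtual crossings through using the already-established $V2$ and $V3$ moves, apply $V4g$ where its orientation now matches, and then remove the auxiliary crossings.

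Finally, I would assemble these derivations into the full argument. Given arbitrary $V$ and $V'$ representing the same oriented virtual knot, the definition of virtual equivalence provides a finite sequence of generalized oriented Reidemeister moves connecting them; I then replace each move in that sequence by its derivation in terms of the eight generators $\{C1a, C1b, C2a, C3a, V1a, V2a, V3a, V4g\}$, concatenating to obtain a single finite sequence of generating moves (together with planar isotopy) transforming $V$ into $V'$. The correctness of this replacement step is immediate once every individual oriented Reidemeister move has been expressed in the generating moves, so the entire theorem reduces to the finite, move-by-move verifications sketched above, with the $V4$ cases carrying essentially all of the difficulty.
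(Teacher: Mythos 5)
Your overall strategy is exactly the paper's: invoke Polyak's theorem so that all sixteen oriented classical moves are freely available, then reduce the theorem to showing that each of the thirteen oriented virtual moves outside $\{V1a, V2a, V3a, V4g\}$ is a finite composition of already-derived moves, proceeding type by type ($V1$/$V2$, then $V3$, then $V4$), and finish with the standard move-by-move replacement/concatenation argument. You even anticipate the two structural points that shape the paper's proof: that the odd $V1$ orientation needs a $V2$-type move to flip the loop (the paper in fact derives $V2b$ \emph{before} $V1b$ for precisely this reason, via Lemmas \ref{Lemma1} and \ref{Lemmav1b}), and that the $V4$ sign changes cannot be done virtually and must be mediated by classical $C2$ variants (the paper's Lemmas \ref{Lemma5} and \ref{Lemma7} use $C2a, C2b$ and $C2c, C2d$ respectively, exactly as you predict).

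The genuine shortfall is that your proposal stops where the paper's proof begins. The entire mathematical content of Theorem \ref{theorem} resides in the nine lemma-level derivations (Lemmas \ref{Lemma1} through \ref{Lemma9}), each of which is an explicit diagrammatic sequence of moves; your text replaces every one of them with a plausibility sketch (``I anticipate,'' ``I expect,'' ``the standard trick will be the detour argument''). In particular, for the $V4$ family --- which you correctly identify as carrying essentially all the difficulty --- you give no actual sequence converting, say, $V4c$ or $V4e$ into applications of $V4g$; it is not obvious a priori that such sequences exist with only \emph{one} $V4$ generator (note the paper's generating set needs only one move per virtual type, in contrast to the two $C1$ moves required classically, which is itself a nontrivial feature that must be verified, not assumed). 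Until those thirteen explicit derivations are exhibited, what you have is a correct proof architecture with every load-bearing step left unproven.
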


\section{A generating set of oriented virtual knot diagrams}\label{secproof}

Let $S$ be a set of oriented virtual Reidemeister moves, as shown in Fig.\ref{vrmo}, and $G=\{V1a, V2a, V3a, V4g\}$, then we call $G$ a generating set of $S$ if and only if every move in $S$ can be expressed as a composition (finite sequence) of the moves in $G$. In other words, $G$ should be sufficient to generate or simulate any move in $S$. To establish that $G$ is indeed a generating set, we need to demonstrate that each move in $S$ can be constructed using some combination of the elements in $G$.

This section is dedicated to the proof of Theorem \ref{theorem}. First, we will divide our set $S$ into subsets $S_1, S_2, S_3, S_4$ and $G$ such that $S_1=\{V1b\}$, $S_2=\{V2b, V2c\}$, $S_3=\{V3b, V3c, V3d\}$, $S_4=\{V4a, V4b, V4c, V4d,$ $V4e, V4f, V4h\}$ and $G=\{V1a, V2a, V3a, V4g\}$. Note that $S=G \cup S_1 \cup S_2 \cup S_3 \cup S_4 =$ \{All possible oriented virtual Reidemeister moves, shown in Fig.\ref{vrmo} \}. First, we will show that the moves in $S_1$ and $S_2$ can be generated by the moves in $G$ (Lemmas \ref{Lemma1} to \ref{Lemma1new}). Then we can use the moves in $G \cup S_1 \cup S_2$ to prove that the moves in the set $S_3$ can be generated by a finite sequence of the moves in $G \cup S_1 \cup S_2$ (Lemma \ref{Lemma2}). Finally, we will show that the moves in the set $S_4$ can be generated by the moves in $G \cup S_1 \cup S_2 \cup S_3$ (Lemmas \ref{Lemma3} to \ref{Lemma9}). The proof of Theorem \ref{theorem} will open a series of questions. One of the important questions is to find all possible minimal generating sets of oriented virtual Reidemeister moves.

Our generating set $G$ contains $V1a, V2a, V3a$, and $ V4g$ moves. Therefore, we can show that the move $V2b$ can be realized by a sequence of moves $V1a, V2a$, and $V3a$ moves. 

\begin{lemma}\label{Lemma1}
The move $V2b$ can be realized by a sequence of $V1a, V2a$ and $V3a$ moves.
\end{lemma}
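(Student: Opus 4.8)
The plan is to realize the counter‑oriented double‑crossing move $V2b$ as a composition of the co‑oriented move $V2a$ together with auxiliary virtual curls and a triangle move, mirroring Polyak's classical reduction of one oriented $R2$ variant to another. The only obstruction to applying $V2a$ directly to a $V2b$ configuration is one of orientation: the two strands bounding the bigon in $V2b$ are oppositely oriented, whereas the domain of $V2a$ requires them to be co‑oriented. So the first task is to locally repair this orientation mismatch without changing the diagram elsewhere.

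First I would introduce a virtual kink on one of the two strands bordering the bigon by a $V1a$ move, positioning the resulting loop so that the short segment it creates runs alongside the partner strand in the co‑oriented sense. This converts a neighbourhood of the bigon into a configuration that now contains the oriented domain of a $V2a$ move, at the cost of one extra virtual self‑crossing from the kink. I would then apply $V2a$ to cancel (respectively create) the bigon.

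After this cancellation the virtual crossing coming from the kink no longer sits in a reducible position, so the next step is to use a $V3a$ move to slide it across the remaining strand into a position where it can be undone; an inverse $V1a$ move then removes the kink. The net effect of the sequence $V1a \to V2a \to V3a \to V1a^{-1}$ is precisely the $V2b$ move. Since all crossings involved are virtual, there is no over/under data to track, and each intermediate configuration is determined entirely by its planar isotopy type and the strand orientations.

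The step I expect to be the main obstacle is the diagrammatic bookkeeping in the middle of this sequence: I must verify that after inserting the kink the local picture is genuinely the oriented domain of $V2a$ rather than merely resembling it, that one $V3a$ move is both applicable and sufficient to free the residual crossing (adjusting the number and placement of the auxiliary $V1a$ curls if it is not), and that collapsing the whole sequence reproduces $V2b$ exactly rather than a mirror or rotated variant. This verification is best carried out by exhibiting an explicit chain of labelled diagrams, with the orientations marked at every crossing at each stage.
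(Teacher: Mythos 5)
Your proposal is correct and is essentially the paper's own argument: the paper's proof of Lemma~\ref{Lemma1} is exactly the explicit chain of diagrams you describe, realizing $V2b$ by inserting a virtual kink with $V1a$, applying $V2a$ to the now co-oriented strands, sliding the strand past the kink's crossing with $V3a$, and removing the kink with an inverse $V1a$. The diagrammatic bookkeeping you flag as the remaining obstacle is precisely what the paper's figure carries out, so your plan and the published proof coincide.
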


\begin{proof} 
$$\centerline{\includegraphics[width=0.8\textwidth]{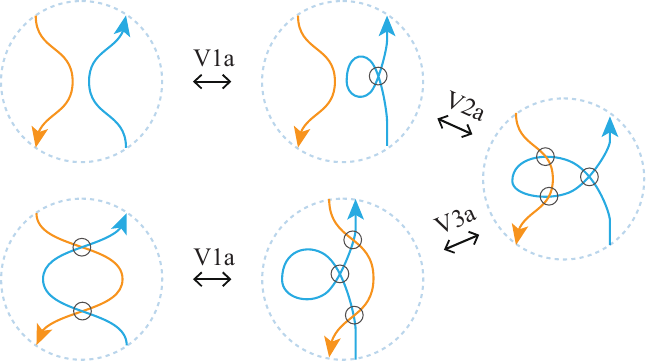}}$$
\end{proof}

We now extend our proofs by incorporating the move $V2b$ alongside the moves in the set $G$. From this point forward, we shall freely use any moves that have already been shown to be derivable from those in $G$ in the remainder of our proofs. The move $V1b$ can be realized by a sequence of $V1a$ and $V2b$ moves. This will complete the proof that the move in the set $S_1$ can be generated by the moves in $G$.

\begin{lemma}\label{Lemmav1b}
The move $V1b$ can be realized by a sequence of $V1a$ and $V2b$ moves.
\end{lemma}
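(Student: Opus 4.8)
The plan is to record the argument as a short sequence of local diagrams in which only $V1a$ moves and $V2b$ moves (together with planar isotopy) are applied, and whose composite effect is precisely the $V1b$ move, matching the pictorial style already used for Lemma \ref{Lemma1}. The underlying idea is that $V1a$ and $V1b$ insert virtual kinks whose loops curl in opposite senses relative to the strand orientation, so that a $V1a$ kink placed beside a $V1b$ kink on the same arc can be merged into a single virtual bigon and then annihilated. This merging step is exactly what is blocked in the classical setting, where the over/under data at the self-crossing obstructs the required isotopy; virtually there is no such data, which is ultimately what lets a single first move generate both orientations.

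Concretely, I would start from the source picture of the $V1b$ move: an oriented arc carrying one $V1b$-type virtual kink. Applying a single $V1a$ move, I introduce on an adjacent segment of the same arc a second virtual kink whose loop curls the opposite way, so the local tangle now has two virtual crossings. A planar isotopy then slides the two loops together so that their two virtual self-crossings come to lie opposite one another between the same pair of subarcs, producing a virtual bigon. A single $V2b$ move removes this bigon, and a final isotopy straightens the arc into the kink-free strand that the $V1b$ move produces; reading the sequence in reverse inserts the $V1b$ kink instead. Hence a single $V1a$ move and a single $V2b$ move suffice, together with isotopy.

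The main obstacle is the orientation bookkeeping rather than the unoriented topology, which is routine. At each stage one must check that the strand directions remain consistent, that the kink created by $V1a$ genuinely has the curl opposite to the existing $V1b$ kink, and, most importantly, that the bigon formed after the isotopy is of type $V2b$ and not $V2a$ or $V2c$, since only $V2b$ is permitted here (cf.\ Fig.~\ref{vrmo}). Confirming that these orientations line up exactly is the crux of the verification, and it is precisely the point at which the virtual case departs from the classical one, where no such reduction of $C1b$ to $C1a$ is available.
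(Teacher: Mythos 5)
Your construction breaks down at the isotopy step, and the failure is combinatorial, not a matter of drawing skill. You create the $V1a$ kink ``on an adjacent segment'' of the arc, next to the existing $V1b$ kink, and then claim a planar isotopy brings the two self-crossings into a bigon that a single $V2b$ move erases. But planar isotopy preserves the underlying combinatorics of a diagram (its Gauss sequence, equivalently its underlying $4$-valent plane graph). With the two kinks side by side, travelling along the strand one meets the crossings in the order $A,A,B,B$, so the crossings $A$ and $B$ are joined by exactly \emph{one} arc of the diagram, namely the segment between the second passage through $A$ and the first passage through $B$. A $V2$ move, in either direction, operates on an empty bigon whose boundary consists of \emph{two} arcs joining the two crossings, which forces the interleaved order $A,B,B,A$ along the strand. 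No isotopy converts $A,A,B,B$ into $A,B,B,A$, so the bigon you need never exists, and no $V2$ move of any orientation type (let alone specifically $V2b$) can remove the two crossings of your intermediate diagram. As written, your sequence cannot terminate in the straight strand.

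The gap is repaired by nesting the kinks rather than juxtaposing them: insert the $V1a$ kink on the \emph{loop} of the $V1b$ kink, so the passages interleave as $A,B,B,A$; that diagram \emph{is} isotopic to a U--bend with one arm poked across the other, and there the poke can be undone by a single $V2b$ move, yielding the straight strand. Equivalently, run the argument forwards, as in Polyak's classical derivation of one $C1$-type move from another together with $C2a$ \cite{polyak}: isotope the straight strand into a U--bend, apply $V2b$ to poke one arm across the other (creating two virtual crossings in the interleaved pattern), then remove the $V1a$-type kink this creates by a $V1a$ move in the decreasing direction; the single remaining crossing is precisely the $V1b$ kink. This poke-then-unkink configuration is what the paper's pictorial proof realizes, and it also resolves your orientation worry for free: the orientations of the bigon are dictated by the chosen $V2b$ move at the moment of the poke, rather than having to be verified after an isotopy. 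Your closing intuition is sound but should be attributed correctly: what makes the classical analogue fail is the crossing sign carried by a classical kink (which is why both $C1a$ and $C1b$ appear in the classical generating set of Fig.\ref{gscrm}), not an obstruction to the isotopy itself.
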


\begin{proof} 
$$\centerline{\includegraphics[width=0.9\textwidth]{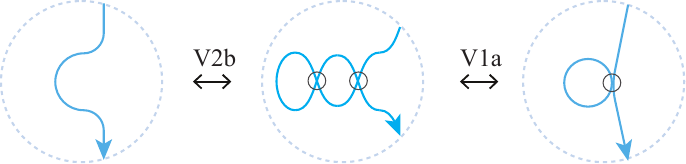}}$$
\end{proof}

The move $V2c$ can be realized by a sequence of $V1b, V2a$, and $V3a$ moves.  This will complete the proof that the moves in the set $S_2$ can be generated by the moves in $G$.

\begin{lemma}\label{Lemma1new}
The move $V2c$ can be realized by a sequence of $V1b, V2a$ and $V3a$ moves.
\end{lemma}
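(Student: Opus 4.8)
The plan is to follow the same diagrammatic, kink-insertion strategy that realized $V2b$ and $V1b$ in the preceding lemmas. The essential point is that $V2c$ differs from the already-available move $V2a$ only in the relative orientation of the two strands forming the virtual bigon: $V2a$ is stated for one orientation pattern of the pair of virtual crossings, while $V2c$ demands the other. My aim is therefore to locally convert the orientation presented by one strand so that $V2a$ becomes applicable, and then to undo that conversion using the virtual curl and triangle moves.

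First I would begin from one side of the $V2c$ configuration and insert a virtual curl on the appropriate strand by an application of $V1b$. This curl introduces a small loop whose effect is to make the local orientation at the relevant virtual crossing compatible with the orientation hypothesis of $V2a$. Next, with the orientations now matching, I would apply $V2a$ to create (or cancel) the virtual bigon between the two strands. The crossings produced by this step will in general not sit in their final position, so I would then use the virtual triangle move $V3a$ to slide a virtual crossing past the curl, transporting the configuration into the shape required on the other side of $V2c$. Finally, a second application of $V1b$, run in reverse, removes the auxiliary curl and leaves exactly the target diagram.

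The main obstacle will be bookkeeping the orientations so that only the permitted moves $V1b$, $V2a$, and $V3a$ are invoked in their stated oriented forms. In particular, since $V3a$ is fixed as a single oriented triangle move, I must verify that the local strand directions created by the inserted curl are precisely those for which $V3a$ applies, rather than one of the other (as yet unproven) oriented triangle variants $V3b$--$V3d$. Checking this compatibility at the moment the curl meets the bigon crossings is the delicate step; once the orientations are confirmed to line up, the remainder is a routine composition of local diagram moves, and the claimed realization of $V2c$ follows. Because $V1b$ has already been shown derivable from $G$ in Lemma \ref{Lemmav1b}, this simultaneously establishes that $V2c$, and hence all of $S_2$, is generated by $G$.
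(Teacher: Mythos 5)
Your proposal follows essentially the same route as the paper: the paper's proof of Lemma \ref{Lemma1new} is exactly a diagrammatic realization of $V2c$ by inserting a curl with $V1b$, applying $V2a$ across the doubled-back arc, sliding past the curl's virtual crossing with $V3a$, and removing the auxiliary curl with $V1b$ in reverse. The orientation compatibility you flag as the delicate step is precisely what the paper's figure settles by exhibiting the explicit oriented diagrams, so your outline is the right one; a finished write-up would only need that sequence of oriented pictures drawn out to confirm that the $V3$ move invoked is indeed the variant $V3a$.
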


\begin{proof} 
$$\centerline{\includegraphics[width=0.8\textwidth]{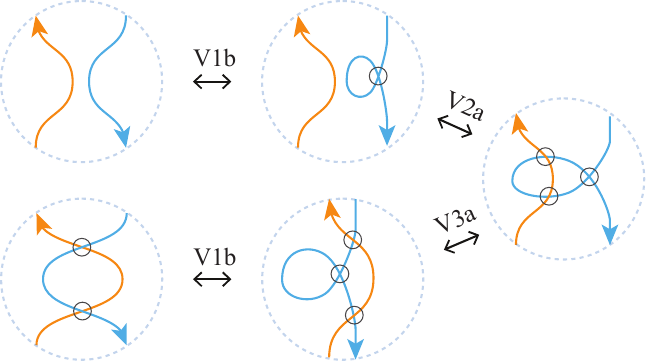}}$$
\end{proof}

Now we can use the moves in $G \cup S_1 \cup S_2$ to prove that the moves in the set $S_3$ can be obtained by a finite sequence of the moves in $G \cup S_1 \cup S_2$.

\begin{theorem}\label{theorem2}
To generate all type $V3$ ($V3b, V3c$ and $V3d$) moves, only one oriented virtual Reidemeister move of type $V3$ ($V3a$) is required.
\end{theorem}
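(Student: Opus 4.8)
The plan is to establish Theorem~\ref{theorem2} by deriving each of the three remaining type~$V3$ moves ($V3b$, $V3c$, $V3d$) from the single move $V3a$ together with the lower-complexity moves already shown derivable from $G$, namely $V1a$, $V1b$, $V2a$, $V2b$, and $V2c$ (Lemmas~\ref{Lemma1} through \ref{Lemma1new}). The guiding principle is the standard ``detour'' or ``crossing-exchange'' technique from virtual knot theory: a type~$V3$ move with a given orientation pattern differs from $V3a$ only in the directions of one or more of the three participating strands, and each such reversal can be locally absorbed by introducing an auxiliary virtual bigon (a $V2$ move) or a virtual kink (a $V1$ move) that flips the relevant strand's incidence at the triangle, applying $V3a$ in the now-matching configuration, and then cancelling the auxiliary feature.

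Concretely, for each of $V3b$, $V3c$, $V3d$, I would first identify which strands of the triangular configuration carry orientations disagreeing with the template of $V3a$. I would then prepend a sequence of $V2$-type moves (chosen from $V2a$, $V2b$, $V2c$ as dictated by whether the neighboring strands are parallel or antiparallel) to create a pair of virtual crossings on the offending strand just outside the triangle, effectively rerouting it so that near the triangle it locally matches the orientation required by $V3a$. After performing $V3a$ on this adjusted diagram, I would remove the introduced bigons by the inverse $V2$ moves, and if the rerouting produced a curl, eliminate it with a $V1a$ or $V1b$ move. The net effect is the desired $V3b$, $V3c$, or $V3d$ move, expressed entirely through $G \cup S_1 \cup S_2$. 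Since the paper presents these derivations as labeled figures (\texttt{v3b.pdf}, etc.), the written proof will simply exhibit the explicit diagrammatic sequence for each case.

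The main obstacle I anticipate is bookkeeping rather than conceptual: ensuring that the auxiliary bigons and curls are inserted on the correct strand and with the correct handedness so that (i) the intermediate diagram genuinely matches the $V3a$ template up to planar isotopy, and (ii) every auxiliary feature can be cancelled after the $V3a$ application without leaving residual crossings. Getting the orientation of each inserted virtual crossing wrong forces the use of a $V2$ variant that is not available, so the choice among $V2a$, $V2b$, $V2c$ must be tracked carefully for each of the four orientation classes of the triangle. Because $V3$ in the virtual setting involves only virtual crossings (unlike the classical $C3$, where over/under data constrains which detours are legal), I expect each case to close cleanly once the correct $V2$ variant is selected, so the three derivations should be short and parallel in structure.
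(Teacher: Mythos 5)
Your proposal takes essentially the same approach as the paper: the paper proves Theorem~\ref{theorem2} via Lemma~\ref{Lemma2}, which realizes each of $V3b$, $V3c$, $V3d$ by a diagrammatic sequence of $V2b$, $V2c$, and $V3a$ moves --- precisely the conjugation-by-$V2$ (detour) technique you outline, with the auxiliary bigons created before and cancelled after the single application of $V3a$. The only cosmetic difference is that the paper's explicit diagrams use just $V2b$ and $V2c$ (no $V1$ moves and no $V2a$), but this stays within the same toolkit $G \cup S_1 \cup S_2$ that you permit, so the two arguments coincide in substance.
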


The proof of Theorem \ref{theorem2} is a consequence of the following lemma.

\begin{lemma}\label{Lemma2}
The moves $V3b, V3c$ and $V3d$ can be realized by a sequence of moves $V2b$, $V2c$ and $V3a$.
\end{lemma}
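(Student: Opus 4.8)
The plan is to prove Lemma \ref{Lemma2} by treating each of the three $V3$-type moves ($V3b$, $V3c$, $V3d$) separately, showing in each case that the move can be realized as a finite sequence built from $V3a$ together with the antiparallel bigon moves $V2b$ and $V2c$. The guiding idea is the standard trick used for classical $C3$ moves: the $V3a$ move only applies when the three strands have one specific compatible orientation pattern, so to simulate a $V3$ move with a different orientation pattern I will first insert virtual bigons to locally reverse the effective orientation of one or more strands, then apply $V3a$ to slide the virtual crossing across the triangle, and finally remove the auxiliary bigons to recover the desired configuration.

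Concretely, for each of $V3b$, $V3c$, and $V3d$ I would proceed as follows. First I would identify precisely how the orientation of that move differs from $V3a$ — that is, which of the three strands point ``the wrong way'' relative to the pattern that $V3a$ requires. Then I would use a $V2b$ or $V2c$ move (whichever matches the local parallel/antiparallel configuration) to introduce a pair of virtual crossings on the offending strand, creating a short detour whose middle segment carries the orientation that $V3a$ can accept. After applying $V3a$ on the modified triangle, the introduced bigon can be cancelled by the inverse $V2b$ or $V2c$ move, leaving exactly the post-$V3b$ (respectively $V3c$, $V3d$) diagram. Since $V2b$ and $V2c$ have already been shown (Lemmas \ref{Lemma1} and \ref{Lemma1new}) to be derivable from $G$, this establishes that each of $V3b$, $V3c$, $V3d$ is derivable from $G\cup S_1\cup S_2$, which is exactly what the lemma asserts.

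The main obstacle I anticipate is \emph{bookkeeping of the orientations and of which bigon move applies where}. Because all crossings here are virtual, there is no over/under information to track, but one must be careful that the intermediate diagrams produced after inserting a bigon genuinely present the strand configuration to which $V3a$ applies, and that the virtual crossings created by $V2b$/$V2c$ are positioned so that the triangle move is legitimately applicable. A subtle point is that $V3$ moves can require reorienting more than one strand, so some cases (likely $V3d$, which may differ from $V3a$ in two strand orientations) could need two bigon insertions rather than one, and the sequence of moves will be correspondingly longer. Verifying that the final cancellation really returns the target diagram — rather than a diagram that differs by an additional stray crossing — is where the care is needed; in practice this is handled cleanly by a sequence of annotated pictures, which is how I would present the proof.
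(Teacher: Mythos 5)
Your proposal takes essentially the same approach as the paper: the paper's proof of Lemma \ref{Lemma2} is exactly a sequence of annotated diagrams realizing each of $V3b$, $V3c$, and $V3d$ by conjugating $V3a$ with $V2b$/$V2c$ bigon insertions and their inverse cancellations, i.e.\ the detour argument you describe. Your plan identifies the correct mechanism and the genuine bookkeeping subtleties, so it matches the paper's (pictorial) proof in substance.
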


\begin{proof} 
$$\centerline{\includegraphics[width=1\textwidth]{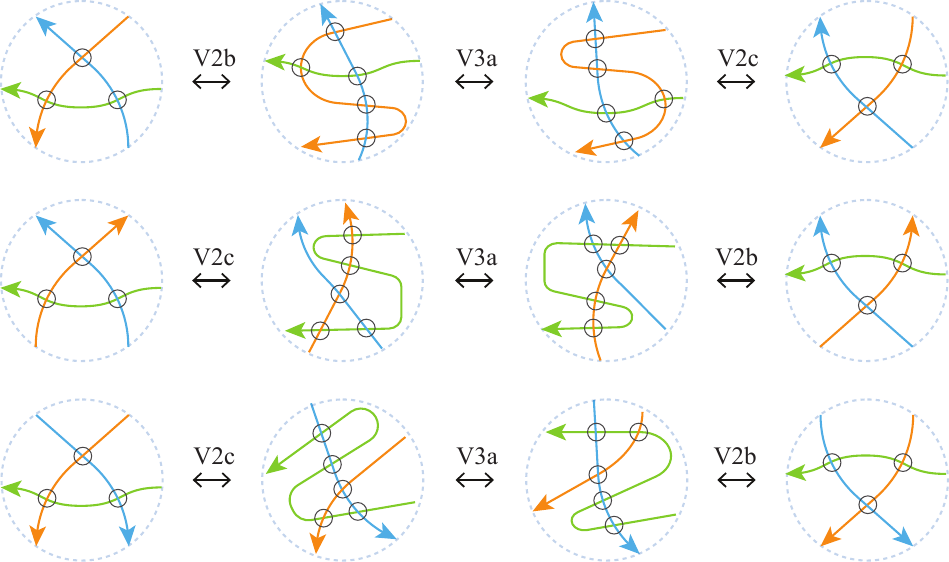}}$$
\end{proof}

At this point, all three types of oriented moves for classical ($C1-C3$) and virtual ($V1-V3$) Reidemeister moves are available, which simplifies the proof of type $V4$ moves.

\begin{lemma}\label{Lemma3}
The move $V4a$ can be realized by a sequence of moves $V2b, V2c$ and $V4g$.
\end{lemma}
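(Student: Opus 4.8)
The plan is to realize the $V4a$ move as a conjugate of the $V4g$ move by virtual bigon moves, in the same spirit as the earlier lemmas that realized the non-generating moves of types $V2$ and $V3$. Both $V4a$ and $V4g$ describe a strand being pushed across a single classical crossing through two virtual crossings; they differ only in the orientations of the strands involved (and correspondingly in how the classical crossing sits relative to the detour strand). Since an orientation cannot be reversed directly by any Reidemeister move, the idea is to insert auxiliary virtual crossings using $V2b$ and $V2c$ so that the local configuration is brought into the precise form on which $V4g$ acts, apply $V4g$, and then remove the auxiliary crossings.

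Concretely, I would proceed as follows. Starting from the left-hand diagram of $V4a$, first apply a $V2b$ (antiparallel) or $V2c$ (parallel) move---whichever matches the relative orientation of the two strands meeting at the classical crossing---to introduce a virtual bigon adjacent to one of the two virtual crossings. This reroutes the detour strand so that the triangular region surrounding the classical crossing matches the input side of $V4g$. Next, apply $V4g$ to slide the detour strand across the classical crossing. Finally, apply the inverse $V2$ move, again of the type dictated by the local orientations, to cancel the virtual bigon introduced in the first step, arriving at the right-hand diagram of $V4a$. All three moves invoked are available: $V2b$ and $V2c$ by Lemmas \ref{Lemma1} and \ref{Lemma1new}, and $V4g \in G$ by hypothesis, so this yields a legitimate sequence and the argument would be displayed as a single figure with each arrow labeled by the move applied.

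The main obstacle is the orientation bookkeeping. One must choose the correct parallel-versus-antiparallel variant of the $V2$ move at each insertion and removal step, and then verify that after the conjugation every strand in the final diagram carries exactly the orientation prescribed by $V4a$, with the classical crossing retaining its sign. The lemma asserts that this can be accomplished with $V2b$, $V2c$, and $V4g$ alone, so the delicate point is positioning the inserted virtual bigon so that $V4g$ becomes \emph{directly} applicable, without first needing a $V3$ move to rotate the triangle into place; confirming that such a placement exists for the specific orientation pattern of $V4a$, and that the endpoints of the inserted and deleted bigons coincide with the boundary of the $V4a$ move, is the crux of the verification.
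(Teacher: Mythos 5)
Your outline matches the paper's proof: the paper realizes $V4a$ exactly as you describe, by inserting a virtual bigon via the derived $V2$ moves ($V2b$, $V2c$ from Lemmas \ref{Lemma1} and \ref{Lemma1new}), sliding the detour strand across the classical crossing with $V4g$, and then cancelling a bigon, all presented as a single labelled sequence of diagrams. The orientation bookkeeping you flag as the crux is precisely what the paper's figure verifies, and it does go through with $V2b$, $V2c$, $V4g$ alone, without any auxiliary $V3$ move, as the lemma asserts.
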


\begin{proof} 
$$\centerline{\includegraphics[width=1\textwidth]{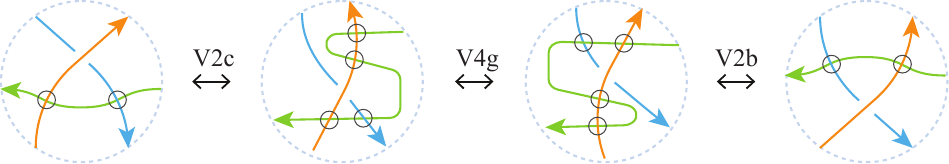}}$$
\end{proof}

\begin{lemma}\label{Lemma4}
The move $V4b$ can be realized by a sequence of moves $V2a$ and $V4g$.
\end{lemma}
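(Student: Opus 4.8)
The plan is to realize $V4b$ as a short detour: take the generator $V4g$ and pad it with a pair of virtual crossings that are introduced and then removed by $V2a$ moves. Concretely, I would first lay the two local pictures of $V4b$ and $V4g$ side by side. Both moves transport a single strand across a classical crossing by passing it through two virtual crossings, so they agree on the classical crossing itself; the only discrepancy should be in the orientation of the transported strand (equivalently, the side of the classical crossing from which that strand enters). My reading of Fig.~\ref{vrmo} is that $V4g$ already performs exactly this transport, but with an approach that differs from the one demanded by $V4b$ by a reversal that a parallel virtual bigon can absorb. Isolating precisely this orientation discrepancy is the first step.

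Next I would insert a $V2a$ bigon on the transported strand immediately before it reaches the classical crossing. Since $V2a$ is the parallel-orientation version of the virtual second move, this insertion is legitimate exactly when the two new arcs run parallel, which I would guarantee by choosing the portion of the strand on which the bigon is placed. After this insertion and a planar isotopy, the local diagram should match the left-hand side of a $V4g$ move verbatim. I would then apply $V4g$ to carry the strand across the classical crossing, and finally delete the now-redundant virtual bigon with a second $V2a$ move, arriving at the right-hand side of $V4b$. Each intermediate diagram is connected to the next by exactly one of the two permitted moves together with isotopy, so the sequence certifies the lemma.

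The main obstacle I anticipate is orientation bookkeeping. The lemma restricts us to $V2a$ alone, so I must verify that \emph{every} bigon I create or cancel is a genuinely parallel ($V2a$) configuration and never an antiparallel one ($V2b$ or $V2c$); a single mismatched orientation would silently force a forbidden variant of the virtual second move. This constraint rigidly determines both the insertion point of the bigon and the direction in which the detour is taken, and reconciling that forced choice simultaneously with the strand orientations prescribed by $V4b$ and with the domain of $V4g$ is the delicate part of the argument. If the orientations cannot be matched with $V2a$ moves of a single type, the decomposition would have to be rerouted, so confirming the parallel-bigon compatibility at the outset is what I would pin down before drawing the full sequence.
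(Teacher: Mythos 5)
Your overall strategy --- sandwiching an application of $V4g$ between a $V2a$ bigon creation and a $V2a$ bigon cancellation --- is exactly the structure of the paper's proof, which is an explicit sequence of oriented diagrams realizing $V4b$ as $V2a$, then $V4g$, then $V2a$ (up to planar isotopy). So the plan itself is the right one.

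However, as written your proposal has a genuine gap, and you flag it yourself: you never carry out the orientation verification that you correctly identify as the delicate part. For a lemma of this kind the proof \emph{is} the explicit sequence of oriented pictures; everything else is scaffolding. Moreover, your proposed mechanism --- that the discrepancy between $V4b$ and $V4g$ lies in "the orientation of the transported strand" and that a parallel bigon can "absorb a reversal" --- cannot work literally: $V2a$, like every Reidemeister move, preserves the orientation of every strand, so no sequence of moves ever changes the direction in which the transported strand points. What actually makes the derivation possible is subtler: the $V2a$ insertion repositions a \emph{segment} of the transported strand to the other side of one of the crossing strands, so that, after planar isotopy and because oriented moves are only defined up to rotation of the local disk, the resulting configuration matches the $V4g$ template --- possibly with $V4g$ applied in the reverse direction or with a different arc playing the role of the moving strand. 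Until you draw the intermediate diagrams and check the in/out pattern of the six boundary endpoints and the sign of the classical crossing against the $V4g$ pattern, the assertion that "the local diagram should match the left-hand side of a $V4g$ move verbatim" is precisely the unproved step; your own closing sentence concedes the decomposition might have to be rerouted. Completing the proof means exhibiting that checked sequence, which is exactly what the paper's figure supplies.
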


\begin{proof} 
$$\centerline{\includegraphics[width=1\textwidth]{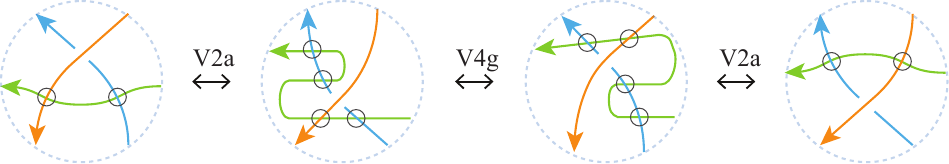}}$$
\end{proof}

\begin{lemma}\label{Lemma5}
The move $V4c$ can be realized by a sequence of moves $C2a, C2b$ and $V4g$.
\end{lemma}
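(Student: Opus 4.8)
The plan is to give a purely diagrammatic proof, exhibiting an explicit finite sequence of diagrams that carries one side of the $V4c$ move into the other using only $C2a$, $C2b$, and the generator $V4g$. The guiding principle is the same as in Lemmas \ref{Lemma3} and \ref{Lemma4}: the move $V4c$ and the generator $V4g$ differ only in the orientation data of the three strands and/or in the sign of the single classical crossing, so the gap between them can be bridged by temporarily introducing a canceling pair of classical crossings and then removing it.

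First I would start from the left-hand configuration of $V4c$, in which a strand is carried across one classical crossing and two virtual crossings arranged as a triangle. I would then apply an appropriate classical second move---$C2a$ or $C2b$, chosen according to whether the relevant strands are antiparallel or parallel---to create an auxiliary bigon of two classical crossings adjacent to the classical crossing of the triangle. Both of these moves are available throughout: by Theorem 1.1 of \cite{polyak} every oriented classical Reidemeister move is generated by the classical part $\{C1a, C1b, C2a, C3a\}$ of $G$, so $C2b$ may be used freely even though it is not itself listed in $G$. The purpose of the bigon is to present, next to the virtual crossings, a classical crossing whose orientation and sign match exactly the pattern required by $V4g$.

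Next I would apply $V4g$ to detour the matched crossing through the two virtual crossings, and finally cancel the auxiliary bigon with the inverse classical second move, arriving at the right-hand side of $V4c$. The full sequence of intermediate diagrams realizing this is displayed in the figure below.

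The main obstacle will be bookkeeping rather than conceptual difficulty: one must verify at every stage that the orientations of all three strands and the sign of the classical crossing remain consistent, so that the bigon produced by the first $C2$ move is precisely the one that renders $V4g$ applicable and so that the final $C2$ move leaves no stray uncancelled crossing. Once the orientation pattern is pinned down, each step is a direct application of a single listed move, and the verification reduces to inspecting the diagram.
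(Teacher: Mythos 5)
Your proposal takes essentially the same approach as the paper: the paper's proof of this lemma consists precisely of such a figure, exhibiting a sequence of diagrams that introduces a canceling pair of classical crossings with one oriented $C2$ move, slides the virtually-crossing strand past the suitably matched crossing via $V4g$, and then removes the residual bigon with the other oriented $C2$ move. The only element missing from your write-up is the explicit figure of intermediate diagrams (the orientation bookkeeping you defer), which is the entire content of the paper's proof; your strategic outline, including the correct observation that $C2b$ may be used freely since the classical part $\{C1a, C1b, C2a, C3a\}$ of the generating set produces all oriented classical moves, matches the paper.
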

\begin{proof} 
$$\centerline{\includegraphics[width=1\textwidth]{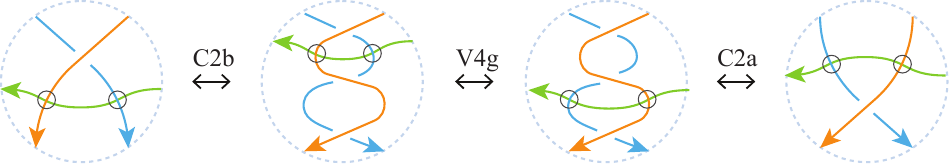}}$$
\end{proof}

\begin{lemma}\label{Lemma6}
The move $V4d$ can be realized by a sequence of moves $V2a$ and $V4c$.
\end{lemma}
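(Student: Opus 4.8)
The plan is to realize $V4d$ by conjugating the already-available move $V4c$ with a pair of $V2a$ moves, exactly mirroring the proofs of the preceding type $V4$ lemmas (in particular Lemma \ref{Lemma4}, where $V4b$ is obtained from $V4g$ by the same conjugation scheme). I would begin with the left-hand diagram of the $V4d$ move, which consists of a single classical crossing together with two virtual crossings arranged in a triangle, carrying the orientations prescribed for $V4d$. Since $V4c$ differs from $V4d$ only in the orientation of one of the three strands, the goal is to engineer a local situation in which the $V4c$ move becomes directly applicable, without changing the virtual knot type.

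First, I would apply a $V2a$ move to the strand whose orientation distinguishes $V4d$ from $V4c$, inserting two adjacent virtual crossings (a parallel virtual bigon). This enlarges the diagram with a short detour along which the relevant strand runs in the direction demanded by the input of $V4c$, while keeping the endpoints and the classical crossing fixed. The orientations are chosen so that the resulting local picture contains precisely the left-hand configuration of the $V4c$ move.

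Next, I would apply $V4c$ to push the strand across the classical crossing, trading the two virtual crossings on one side of the triangle for the two on the opposite side. A second $V2a$ move then removes the auxiliary bigon introduced in the first step, and the diagram obtained is exactly the right-hand side of the $V4d$ move. The full sequence is what the accompanying figure would display.

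The main obstacle is the bookkeeping of orientations together with the over/under data at the classical crossing: I must verify that the bigon created by $V2a$ aligns the strand so that $V4c$ genuinely applies (rather than some other $V4$ variant), and that cancelling the bigon afterward returns the precise $V4d$ output and does not covertly require an antiparallel ($V2b$ or $V2c$) version. Confirming that every intermediate diagram is a legal virtual diagram with consistent orientations is the crux; once the strand is correctly set up, the application of $V4c$ and the removal of the bigon are routine.
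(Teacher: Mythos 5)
Your proposal is correct and follows essentially the same approach as the paper: the paper's proof is a figure exhibiting exactly the conjugation (detour) scheme you describe, namely a $V2a$ move inserting a parallel virtual bigon, an application of $V4c$ to the resulting configuration, and a second $V2a$ move cancelling the bigon to yield the right-hand side of $V4d$. The orientation bookkeeping you flag as the crux is precisely what the paper's diagram verifies, and it works out with parallel bigons only, consistent with the lemma's claim that no antiparallel ($V2b$, $V2c$) moves are needed.
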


\begin{proof} 
$$\centerline{\includegraphics[width=1\textwidth]{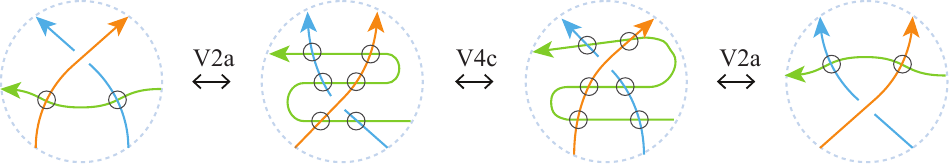}}$$
\end{proof}

\begin{lemma}\label{Lemma7}
The move $V4e$ can be realized by a sequence of moves $C2c, C2d$ and $V4b$.
\end{lemma}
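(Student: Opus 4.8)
The plan is to follow the same template used in Lemmas \ref{Lemma5} and \ref{Lemma6}: realize the target move by temporarily creating an auxiliary pair of classical crossings next to the classical crossing of the diagram, performing the already-available move $V4b$ on the modified picture, and then cancelling the auxiliary crossings. Since all the proofs here are diagrammatic, I would present the argument as a chain of local pictures, each differing from the previous one by a single move drawn from $\{C2c, C2d, V4b\}$.

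Concretely, I would start from the left-hand diagram of $V4e$, which consists of a classical crossing together with a strand passing by it through two virtual crossings, in the orientation pattern specific to $V4e$. The first step is to apply a $C2c$ move to the two arcs meeting at the classical crossing, introducing a cancelling pair of classical crossings and hence a small bigon adjacent to the original crossing. The point of this insertion is that, in the resulting local picture, the virtual strand now sits in exactly the configuration required on the left-hand side of $V4b$; the antiparallel orientation of the two arcs is precisely what makes $C2c$ (rather than the parallel move $C2a$) the correct move to use here, mirroring the parallel case handled in Lemma \ref{Lemma5}.

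Having set up the $V4b$ configuration, the second step is to apply $V4b$ to slide the virtual strand across the classical crossing. This produces a diagram that agrees with the right-hand side of $V4e$ except for the leftover auxiliary crossings. The final step is to remove this residual pair with a $C2d$ move, yielding exactly the right-hand diagram of $V4e$ and completing the realization.

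The main obstacle I anticipate is bookkeeping of orientations and over/under data rather than any conceptual difficulty: each of $C2c$ and $C2d$ is a specific oriented second Reidemeister move with fixed orientation and crossing conventions, so I must check that the two arcs at the classical crossing of $V4e$ really do admit a $C2c$ insertion on the side where the detour strand lives, and that the crossings surviving after the $V4b$ slide are of exactly the type that $C2d$ can cancel. Verifying that these conventions are mutually compatible throughout the sequence, so that no orientation is silently reversed and no crossing sign is altered, is the delicate point, and it is what the accompanying figure will make explicit.
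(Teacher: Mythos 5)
Your insert--slide--cancel strategy is essentially the paper's own proof, which is given purely as a figure: a $C2c$ poke on the two strands of the classical crossing on the side where the virtual strand sits, a $V4b$ slide of that strand across the nearer inserted crossing, and a backward $C2$ cancellation via $C2d$. One bookkeeping correction to the step you yourself flagged as delicate: after the $V4b$ slide the virtual strand lies inside the bigon formed by the two inserted crossings, so $C2d$ cannot cancel that ``auxiliary pair''; instead it cancels the original crossing together with the adjacent inserted one, leaving the other inserted crossing (which has the same sign as the original) as the classical crossing of the right-hand side of $V4e$.
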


\begin{proof} 
$$\centerline{\includegraphics[width=1\textwidth]{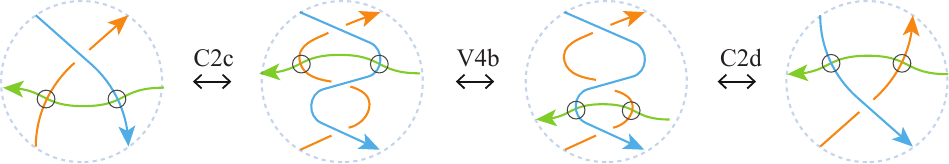}}$$
\end{proof}

\begin{lemma}\label{Lemma8}
The move $V4f$ can be realized by a sequence of moves $V2a$ and $V4d$.
\end{lemma}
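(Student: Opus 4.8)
The plan is to reuse the insert-and-delete strategy that already produced Lemmas \ref{Lemma4} and \ref{Lemma6}, in which one type-$V4$ move was obtained from another by introducing an auxiliary virtual bigon, applying the known move, and then removing the bigon. Since $V4f$ and $V4d$ carry the same classical crossing and differ only in the orientation of one of the three strands participating in the configuration, the idea is to use $V2a$ to reroute that strand so that the picture becomes a genuine instance of the left-hand side of $V4d$, which is available by the convention that moves already derived from $G$ may be used freely.

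Concretely, I would begin from the left-hand side of $V4f$ and apply $V2a$ to introduce a pair of virtual crossings on the strand whose orientation distinguishes $V4f$ from $V4d$. This auxiliary bigon reroutes that strand so that the local picture matches the left-hand side of $V4d$ exactly, while leaving the classical crossing, and in particular its sign, untouched, since $V2a$ involves only virtual crossings. I would then apply $V4d$ to slide the classical crossing past the two virtual crossings, and finally apply $V2a$ a second time, in the opposite direction, to absorb the auxiliary bigon, leaving precisely the right-hand side of $V4f$.

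The step that requires the most care is confirming that the $V2a$ insertion lands on an instance of $V4d$ rather than on a neighbouring variant such as $V4c$ or $V4e$: the eight type-$V4$ moves are separated only by the orientations of their three strands together with the sign of the classical crossing, so the arrow on each strand must be tracked at every stage. Because $V2a$ respects orientations and never meets a classical crossing, the crossing sign is preserved automatically, and the only genuine obstacle is aligning all three orientations simultaneously. Once they match, the sequence $V2a$, $V4d$, $V2a$ realizes $V4f$, as displayed diagrammatically below.
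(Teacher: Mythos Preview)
Your proposal is correct and matches the paper's own argument: the paper's proof is a single diagram implementing exactly the sequence you describe --- a $V2a$ insertion on the strand whose orientation separates $V4f$ from $V4d$, one application of $V4d$, and a $V2a$ deletion --- the same pattern already used for Lemmas~\ref{Lemma4} and~\ref{Lemma6}.
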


\begin{proof} 
$$\centerline{\includegraphics[width=1\textwidth]{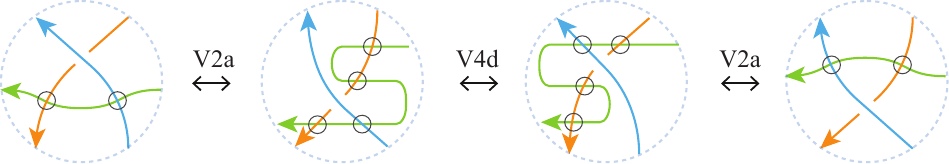}}$$
\end{proof}

\begin{lemma}\label{Lemma9}
The move $V4h$ can be realized by a sequence of moves $V2a$ and $V4g$.
\end{lemma}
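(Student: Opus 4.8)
The plan is to realize $V4h$ by the same detour strategy already used in Lemma \ref{Lemma4} for $V4b$, since both moves are to be generated from the identical pair $\{V2a, V4g\}$. I would begin from the left-hand configuration of $V4h$, namely a single classical crossing together with a third strand that passes it through two virtual crossings. The net effect of the move is to slide that third strand from one side of the classical crossing to the other, which is precisely what $V4g$ accomplishes. The difficulty is that the orientation of one of the three participating strands in $V4h$ differs from the orientation demanded by $V4g$, so $V4g$ cannot be applied directly to the given picture; the job of the $V2a$ moves is to repair this mismatch.

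Concretely, I would first apply a $V2a$ move to insert a canceling pair of virtual crossings next to the strand whose orientation obstructs the direct use of $V4g$. This insertion reroutes the strand locally so that the resulting sub-diagram becomes an orientation-preserving copy of the source pattern of $V4g$, with all three strands now meeting the crossing in the configuration that $V4g$ requires. I would then apply $V4g$ to carry the strand across the classical crossing, and finally remove the auxiliary virtual bigon with a second $V2a$ move, recovering exactly the right-hand configuration of $V4h$. Read from left to right, the sequence $V2a$, $V4g$, $V2a$ then furnishes the desired realization.

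The step I expect to be the main obstacle is checking that the fixed orientations are compatible at every stage, rather than any topological subtlety. The move $V2a$ can only create or delete a virtual bigon whose two strands carry the one specific relative orientation it prescribes, and $V4g$ likewise pins down the orientations of all three strands and the sign of the classical crossing; so I must verify that the bigon introduced in the first step genuinely has the $V2a$ orientation pattern (and not that of $V2b$ or $V2c$), and that after its insertion the intermediate diagram is an honest orientation-matching instance of $V4g$ and not merely a rotation of one. Once these orientation bookkeeping checks are confirmed, the intermediate diagrams are forced and the composition closes up to give $V4h$.
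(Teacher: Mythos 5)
Your detour argument --- inserting a virtual bigon with $V2a$ next to the obstructing strand, sliding that strand past the classical crossing with $V4g$, then cancelling the auxiliary bigon with a second $V2a$ --- is exactly the sequence of diagrams shown in the paper's pictorial proof, so your proposal is correct and takes essentially the same approach. The orientation bookkeeping you flag as the main obstacle is precisely what the paper's figure verifies, and it does close up as you predict.
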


\begin{proof} 
$$\centerline{\includegraphics[width=1\textwidth]{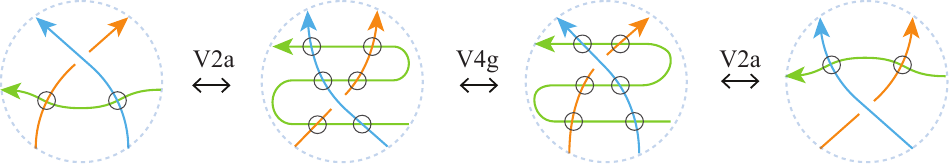}}$$
\end{proof}

This concludes the proof of Theorem \ref{theorem}.

The proof establishing a generating set for oriented virtual knot diagrams opens the door to several intriguing questions for future exploration. These include:

\begin{enumerate}
    \item Caprau and Scott in \cite{Caprau} proved that in addition to \{$C1a, C1b, C2a$, $C3a$\} there are eleven more minimal 4-element generating sets of oriented Reidemeister moves. They showed that these twelve sets encompass all possible minimal generating sets for oriented classical Reidemeister moves. Similar research can be conducted for oriented virtual Reidemeister moves, identifying alternative generating sets of oriented virtual Reidemeister moves and proving their minimality.
    
    \item In classical knot theory, the axiomatization of the Reidemeister moves leads naturally to the definition of a quandle. In \cite{gsosk}, authors generalize the structure of quandles by introducing a new algebraic structure tailored for singular oriented knots and links. The axioms of oriented singquandles are derived from a generating set of Reidemeister moves. A similar approach can be extended to virtual knot theory. This would involve developing axioms for a virtual quandle based on the generating set of oriented virtual Reidemeister moves, creating new avenues for exploring invariants and other properties in the context of virtual knot theory.
\end{enumerate}

\bibliographystyle{amsplain}

\end{document}